\documentclass[a4paper, 12pt]{amsart}
\usepackage[dvips]{graphicx}
\usepackage[all]{xy}
%\usepackage{hyperref}
%\DeclareGraphicsRule{.gif}{pdf}{*}{`gif2eps #1 -}

\newtheorem{thm}{Theorem}[section]
\newtheorem{prop}[thm]{Proposition}

\newtheorem{definition}[thm]{Definition}
\newtheorem{corollary}[thm]{Corollary}
%\newdefinition{rmk}{Remark}
%\newproof{pf}{\bf Proof}

%\pagestyle{headings}
\newtheorem{theorem}{Theorem}[section]
\newtheorem{lemma}[theorem]{Lemma}

\newtheorem{remark}{\bf Remarks}[section]

\numberwithin{equation}{section} \linespread{1.3}

\newcommand{\bea} {\begin{eqnarray*}}
\newcommand{\beq} {\begin{equation}}
\newcommand{\bey} {\begin{eqnarray}}
\newcommand{\eea} {\end{eqnarray*}}
\newcommand{\eeq} {\end{equation}}
\newcommand{\eey} {\end{eqnarray}}

\bibliographystyle{amsplain}

\begin{document}
\title[One-parameter Lefschetz class of homotopies on torus]
{One-parameter Lefschetz class of homotopies on torus}
\author{Weslem L. Silva}
\address[Weslem Liberato Silva]{Departamento de Ci\^encias Exatas e Tecnol\'ogicas (DCET), Universidade Estadual de Santa Cruz (UESC), Ilh\'eus - BA, 2015}
\email{wlsilva@uesc.br}
\begin{abstract}
The main result this paper states that if $F: T \times I \to T$ is a homotopy on torus then the one-parameter Lefschetz class $L(F)$ of $F$ is given by $L(F) = \pm N(F)\alpha$, where $N(F)$ is the one-parameter Nielsen number of $F$ and $\alpha$ is one of the two generators in 
$H_{1}(\pi_{1}(T),\mathbb{Z})$.
\end{abstract}
\date{\today}
\keywords{One-parameter fixed point theory, one-parameter Lefschetz class, one-parameter Nielsen number }
\subjclass[2010]{Primary 55M20; Secondary 57Q40, 57M05}
\maketitle
\bibliographystyle{amsplain}

%%%%%%%%%%%%%%%%%%%%%%%%%%%%%%%%%%%%%%%%%%%%%%%%%%%%%%%
\section{Introduction}% bold, centered;
		      % don't type final punctuation

Let $F: T \times I \to T$ be a homotopy on torus and $G = \pi_{1}(T,x_{0})$. R.Geoghegan and A. Nicas in \cite{G-N-94} developed an one-parameter theory and defined the one-parameter trace $R(F)$ of $F$. The element $R(F)$ is a 1-chain in $HH_{1}(\mathbb{Z}G,(\mathbb{Z}G)^{\phi}))$, where the structure of the bimodule $(\mathbb{Z}G)^{\phi})$ is given in section 2. This 1-chain gives information about the fixed points of $F$, that is, using $R(F)$ is possible to define the one-parameter Nielsen number $N(F)$ of $F$ and the one-parameter Lefschetz class $L(F)$ of $F$. N(F) is the number of non-zero C-components in $R(F)$ and $L(F)$ is the image of $R(F)$ in $H_{1}(G)$ by homomorfism $\bar{j_{C}}: H_{1}(Z(g_{C})) \to H_{1}(G)$, induced by inclusion $j_{C}: Z(g_{C}) \to G$, where $Z(g_{C})$ is the semicentralizer of an element $g_{C}$ which represents the semiconjugacy class $C$. The precise definition is given in \cite{G-N-94}.

The main purpose this paper is show that for which homotopy on torus then $L(F) = \pm N(F) \alpha$, where $\alpha$ is on of the two generators in $H_{1}(G)$.    
  
In \cite{B-B-P-T-75} R.B.S.Brooks et al. showed that if $f: X \to X$ is any map on a k-dimensional torus $X$ then $N(f) = |L(f)|$, where 
$N(f)$ is the Nielsen number and $L(f)$ the Lefschetz number of $f$. In some sense our result is a version of this result for  
one-parameter case when $k=2$. 

This paper is organized into five sections, besides this one. In Section 2 contain a review of one-parameter fixed point theory. 
In section 3 we presented some results of semiconjugacy classes on torus. In Section 4 have the proof of the main result which is the 
Theorem \ref{maintheorem}. 

%%%%%%%%%%%%%%%%%%%%%%%%%%%%%%%%%%%%%%%%%%

\section{One-parameter Fixed Point Theory}

Let $X$ be a finite connected CW complex and $F: X \times I \to X$ a cellular homotopy.
We consider  $I = [0, 1]$ with the usual CW structure and orientation of cells, 
and $X \times I$ with the product CW structure, where its cells are given the product orientation.
Pick a basepoint $(v,0) \in X \times I$, and a basepath $\tau$ in $X$ from $v$ to $F(v,0)$. 
We identify $\pi_{1}(X \times I, (v,0)) \equiv G$ with $\pi_{1}(X,v) $ via the isomorphism 
induced by projection $p: X \times I \to X$. We write $\phi: G \to G$ for the homomorphism;
$$ \pi_{1}(X \times I, (v,0)) \stackrel{F_{\#}}{\to} \pi_{1}(X, F(v,0)) \stackrel{c_{\tau}}{\to} \pi_{1}(X, v) $$

We choose a lift $\tilde{E}$ in the universal cover, $\tilde{X}$, of $X$ for each cell $E$ and 
we orient $\tilde{E}$ compatibly with $E$. 
Let $\tilde{\tau}$ be the lift of the basepath $\tau$ which starts at in the basepoint $\tilde{v} \in \tilde{X}$ 
and $\tilde{F}: \tilde{X} \times I \to \tilde{X}$ the unique lift of $F$ satisfying $\tilde{F}(\tilde{v},0) = \tilde{\tau}(1)$.
We can regard $C_{\ast}(\tilde{X})$ as a right $\mathbb{Z}G$ chain complex as follows: if $\omega$ is a loop 
at $v$ which lifts to a path $\tilde{\omega}$ starting at $\tilde{v}$ then $\tilde{E}[\omega]^{-1} = h_{[w]}(\tilde{E})$, 
where $h_{[\omega]} $ is the covering transformation sending $\tilde{v}$ to $\tilde{\omega}(1)$.

The homotopy $\tilde{F}$ induces a chain homotopy $\tilde{D_{k}}: C_{k}(\tilde{X}) \to C_{k+1}(\tilde{X})$ 
given by $\tilde{D_{k}}(\tilde{E}) = (-1)^{k+1}F_{k}(\tilde{E} \times I) \in C_{k+1}(\tilde{X})$, for each cell $\tilde{E} \in \tilde{X}$. 
This chain homotopy satisfies; $\tilde{D}(\tilde{E}g) = \tilde{D}(\tilde{E}) \phi(g)$ and the boundary operator 
$\tilde{\partial_{k}}: C_{k}(\tilde{X}) \to C_{k-1}(\tilde{X})$ satisfies; $\tilde{\partial}(\tilde{E}g)= \tilde{\partial}(\tilde{E})g$.

Define endomorphism of, $\oplus_{k} C_{k}(\tilde{X})$, by $\tilde{D_{\ast}} = \oplus_{k} (-1)^{k+1} \tilde{D_{k}}$,
$ \tilde{\partial_{\ast}} = \oplus_{k} \tilde{\partial_{k}}$, $\tilde{F_{0 \ast}} = \oplus_{k} (-1)^{k} \tilde{F_{0 k}} $ 
and $\tilde{F_{1 \ast}} = \oplus_{k} (-1)^{k} \tilde{F_{1 k}} $.
We consider  trace$(\tilde{\partial_{\ast}} \otimes \tilde{D_{\ast}}) \in HH_{1}(\mathbb{Z}G, (\mathbb{Z}G)^{\phi})$. 
This is a Hochschild 1-chain whose boundary is:
trace$(\tilde{D_{\ast}}\phi(\tilde{\partial_{\ast}}) - \tilde{\partial_{\ast}} \tilde{D_{\ast}}) .  $
We denote by $G_{\phi}(\partial(F))$ the subset of $G_{\phi}$ consisting of semiconjugacy classes associated to fixed 
points of $F_{0}$ or $F_{1}$.

\begin{definition}
The  one-parameter trace of homotopy $F$ is:
$$R(F) \equiv T_{1}(\tilde{\partial_{\ast}} \otimes \tilde{D_{\ast}}; G_{\phi}(\partial(F))) \in 
\bigoplus_{C \in G_{\phi} - G_{\phi}(\partial(F))} HH_{1}(\mathbb{Z}G, (\mathbb{Z}G)^{\phi})_{C}  $$
$$ \cong \bigoplus_{C \in G_{\phi} - G_{\phi}(\partial(F))} H_{1}(Z(g_{C})). $$
\end{definition}

\begin{definition} The $C-$component of $R(F)$ is denoted by $i(F,C) \in $ \break $ {HH_{1}(\mathbb{Z}G, (\mathbb{Z}G)^{\phi})}_{C}.$
We call it the  fixed point index of $F$ corresponding to semiconjugacy class $C \in G_{\phi}$. A fixed point index $i(F,C)$ of 
$F$ is zero if the all cycle in $i(F,C)$ is homologous to zero. 
\end{definition}

\begin{definition} Given a cellular homotopy $F: X \times I \to X$ the  one-parameter Nielsen number, $N(F)$, of $F$ 
is the number of nonzero fixed point indices. 
\end{definition}
  
\begin{definition} The  one-parameter Lefschetz class, $L(F)$, of $F$ is defined by;  
$$L(F) = \displaystyle \sum_{C \in G_{\phi} - G_{\phi}(\partial F)} j_{C}(i(F,C)) $$ where 
$j_{C}: H_{1}(Z(g_{C})) \to H_{1}(G)$ is induced by the inclusion $Z(g_{C}) \subset G$.
\end{definition}

From \cite{G-N-94} we have the following theorems. 

\begin{theorem}[Invariance] \label{invariance}
Let $F,G: X \times I \to X$ be cellular; if $F$ is homotopic to $G$ relative to $X\times \{0,1\}$ then $R(F) = R(G)$.
\end{theorem}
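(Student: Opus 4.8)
The theorem is quoted from \cite{G-N-94}; the plan is to reduce $R(F)-R(G)$ to a chain-level ``homotopy of chain homotopies'' and then prove that term is trivial in Hochschild homology. A homotopy $F\simeq G$ rel $X\times\{0,1\}$ is a map $\mathcal H\colon X\times I\times I\to X$ with $\mathcal H(\cdot,\cdot,0)=F$, $\mathcal H(\cdot,\cdot,1)=G$, constant in the last coordinate on $X\times\{0,1\}\times I$; its four faces on $X\times\partial(I\times I)$ are $F$, $G$, $F_{0}\circ\mathrm{pr}$ and $F_{1}\circ\mathrm{pr}$, all cellular, so by cellular approximation rel that subcomplex I may assume $\mathcal H$ cellular. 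Since $F_{0}=G_{0}$ and $F_{1}=G_{1}$ as maps of $X$, choosing lifts and basepaths compatibly gives $\tilde F_{0\#}=\tilde G_{0\#}$, $\tilde F_{1\#}=\tilde G_{1\#}$ and $G_{\phi}(\partial F)=G_{\phi}(\partial G)$, so $R(F)$ and $R(G)$ live in the same group $\bigoplus_{C\notin G_{\phi}(\partial F)}HH_{1}(\Z G,(\Z G)^{\phi})_{C}$.

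Next I extract the chain datum: the lift $\tilde{\mathcal H}$ sends a $k$-dimensional cell-lift $\tilde E$ to a $(k+2)$-chain $\tilde K_{k}(\tilde E):=\pm\,\tilde{\mathcal H}_{\ast}(\tilde E\times I\times I)$, giving a $\phi$-twisted map $\tilde K_{\ast}\colon C_{\ast}(\tilde X)\to C_{\ast+2}(\tilde X)$ with $\tilde K(\tilde E g)=\tilde K(\tilde E)\phi(g)$. Expanding $\partial(\tilde E\times I\times I)$ and applying $\tilde{\mathcal H}_{\ast}$: the faces $\tilde E\times\{0,1\}\times I$ push forward to $0$ (there $\mathcal H$ is constant in an $I$-direction, so the resulting cellular chain is degenerate), the faces $\tilde E\times I\times\{0,1\}$ give $\tilde D_{\ast}^{F}(\tilde E)$ and $\tilde D_{\ast}^{G}(\tilde E)$ by the defining formula $\tilde D_{k}(\tilde E)=(-1)^{k+1}F_{k}(\tilde E\times I)$, and $(\partial\tilde E)\times I\times I$ gives $\tilde K(\partial\tilde E)$. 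Keeping track of signs, this yields the identity $\tilde D_{\ast}^{F}-\tilde D_{\ast}^{G}=\pm(\tilde\partial_{\ast}\tilde K_{\ast}+\tilde K_{\ast}\tilde\partial_{\ast})$; only the signs should require real care here.

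Since $T_{1}(\cdot\otimes\cdot)$ is assembled additively from the matrix entries of its two arguments, it is linear in each; hence $R(F)-R(G)=T_{1}(\tilde\partial_{\ast}\otimes(\tilde D_{\ast}^{F}-\tilde D_{\ast}^{G}))=\pm\,T_{1}(\tilde\partial_{\ast}\otimes(\tilde\partial_{\ast}\tilde K_{\ast}+\tilde K_{\ast}\tilde\partial_{\ast}))$, and everything reduces to the algebraic lemma that, for any $\phi$-twisted degree-$(+2)$ map $\tilde K_{\ast}$, the Hochschild $1$-chain $T_{1}(\tilde\partial_{\ast}\otimes(\tilde\partial_{\ast}\tilde K_{\ast}+\tilde K_{\ast}\tilde\partial_{\ast}))$ is a boundary in $\bigoplus_{C\notin G_{\phi}(\partial F)}HH_{1}(\Z G,(\Z G)^{\phi})_{C}$. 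One first checks that it is a cycle: its Hochschild boundary is $T_{0}$ applied to $(\tilde\partial_{\ast}\tilde K_{\ast}+\tilde K_{\ast}\tilde\partial_{\ast})\phi(\tilde\partial_{\ast})-\tilde\partial_{\ast}(\tilde\partial_{\ast}\tilde K_{\ast}+\tilde K_{\ast}\tilde\partial_{\ast})$, which collapses to $T_{0}(\tilde\partial_{\ast}\tilde K_{\ast}\tilde\partial_{\ast}-\tilde\partial_{\ast}\tilde K_{\ast}\tilde\partial_{\ast})=0$ using $\tilde\partial_{\ast}^{2}=0$. To see it is a boundary I would slide one copy of $\tilde\partial_{\ast}$ across the tensor via the cyclic (Hochschild) relation, introducing a ``tertiary'' trace $T_{2}$ built from $\tilde\partial_{\ast},\tilde\partial_{\ast},\tilde K_{\ast}$: the class then becomes $d_{2}$ of that $T_{2}$-term, plus a $T_{1}(\tilde\partial_{\ast}^{2}\otimes\tilde K_{\ast})$-type term that vanishes, plus leftover terms whose $T_{0}$-shadow is supported on the Reidemeister classes of $\mathrm{Fix}(F_{0})\cup\mathrm{Fix}(F_{1})$, i.e.\ on $G_{\phi}(\partial F)$, hence zero in the quotient. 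I expect this lemma to be the main obstacle: it forces one to write out the explicit (``secondary'') formula for $T_{1}$ and carry out the Hochschild-exactness bookkeeping by hand, whereas the first two paragraphs are routine. An equivalent repackaging is to exhibit $R(F)$ as a trace homomorphism applied to the class of the pair $(\tilde\partial_{\ast},\tilde D_{\ast}^{F})$, taken modulo conjugation and modulo adding any $\tilde\partial_{\ast}\tilde K_{\ast}+\tilde K_{\ast}\tilde\partial_{\ast}$ to $\tilde D_{\ast}^{F}$; the second paragraph then says that a homotopy rel $X\times\{0,1\}$ produces equal classes.
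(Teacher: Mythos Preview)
The paper does not prove Theorem~\ref{invariance} at all: it merely records the statement and cites \cite{G-N-94} (``From \cite{G-N-94} we have the following theorems''). There is therefore no in-paper proof to compare your proposal against.

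Your outline is essentially the argument of Geoghegan--Nicas: lift a homotopy of homotopies to a cellular $\tilde{\mathcal H}$, extract a degree-$(+2)$ $\phi$-twisted map $\tilde K_{\ast}$ witnessing $\tilde D^{F}_{\ast}-\tilde D^{G}_{\ast}=\pm(\tilde\partial_{\ast}\tilde K_{\ast}+\tilde K_{\ast}\tilde\partial_{\ast})$, and then use the Hochschild/cyclic bookkeeping to show $T_{1}(\tilde\partial_{\ast}\otimes(\tilde\partial_{\ast}\tilde K_{\ast}+\tilde K_{\ast}\tilde\partial_{\ast}))$ is a boundary off $G_{\phi}(\partial F)$. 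Your identification of the main obstacle---the explicit secondary-trace computation---is accurate; in \cite{G-N-94} this is packaged as a general lemma about traces in the Hochschild complex, and if you intend to carry it out here you will indeed have to reproduce that computation. The rest of your setup (cellular approximation rel the boundary subcomplex, vanishing of the degenerate faces coming from $X\times\{0,1\}\times I$, and the equality $G_{\phi}(\partial F)=G_{\phi}(\partial G)$) is routine and correctly sketched.
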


\begin{theorem}[one-parameter Lefschetz fixed point theorem]
If $L(F) \neq 0$ then every map homotopic to $F$ relative to $X \times \{0,1\}$ has a fixed point 
not in the same fixed point class as any fixed point in $X \times \{0,1\}$. In particular, if 
$F_{0}$ and $F_{1}$ are fixed point free, every map homotopic to $F$ relative to $X \times \{0,1\}$ has 
a fixed point.   
\end{theorem}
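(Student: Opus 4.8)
The plan is to obtain the statement as a contrapositive, using the Invariance Theorem~\ref{invariance} together with a \emph{localization} property of the one-parameter fixed point index, established in \cite{G-N-94}: the $C$-component $i(F,C)$ of $R(F)$ is carried by the set of fixed points of $F$ lying in the semiconjugacy class $C$, so in particular $i(F,C)=0$ whenever $F$ has no fixed point in $C$. Granting this, the deduction is short.

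Fix a map $G\colon X\times I\to X$ homotopic to $F$ relative to $X\times\{0,1\}$, and suppose for contradiction that every fixed point of $G$ lies in the same fixed point class as some fixed point in $X\times\{0,1\}$. Since $G_{0}=F_{0}$ and $G_{1}=F_{1}$, the classes of the fixed points of $G$ in $X\times\{0,1\}$ are precisely the members of $G_{\phi}(\partial F)$, so the assumption says that $G$ has no fixed point in any class $C\in G_{\phi}-G_{\phi}(\partial F)$. After a small homotopy rel $X\times\{0,1\}$ --- which, by compactness of the fixed point set and local constancy of the fixed point class, does not create fixed points in new classes --- we may assume $G$ cellular, so that $R(G)$ is computed cell-by-cell. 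By the localization property, $i(G,C)=0$ for every $C\in G_{\phi}-G_{\phi}(\partial F)$; that is, $R(G)=0$, and hence $L(G)=\sum_{C} j_{C}(i(G,C))=0$. On the other hand the Invariance Theorem gives $R(F)=R(G)$, so $L(F)=L(G)=0$, contradicting $L(F)\neq 0$. Therefore $G$ has a fixed point not lying in $X\times\{0,1\}$ whose fixed point class is distinct from that of every fixed point in $X\times\{0,1\}$.

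The final assertion follows at once: if $F_{0}$ and $F_{1}$ are fixed point free then $G_{0}=F_{0}$ and $G_{1}=F_{1}$ are too, so $G_{\phi}(\partial F)=\emptyset$, every fixed point of any $G$ homotopic to $F$ rel $X\times\{0,1\}$ automatically belongs to a class of $G_{\phi}-G_{\phi}(\partial F)$, and the first part produces one.

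I expect the real content --- the step one would import from \cite{G-N-94} rather than reprove --- to be the localization property itself: that the Hochschild $1$-chain $T_{1}(\tilde{\partial_{\ast}}\otimes\tilde{D_{\ast}};G_{\phi}(\partial F))$ has vanishing $C$-component for each $C\notin G_{\phi}(\partial F)$ when $G$ has no interior fixed point in those classes. This is where one must exploit the geometry of the chain homotopy $\tilde{D_{\ast}}$ --- built from the prisms $F_{k}(\tilde E\times I)$ --- and pass to a fine enough cellular subdivision so that every nonzero local contribution to the trace is pinned to a cell meeting the fixed point set, the contributions away from it cancelling exactly as in the classical Lefschetz--Hopf computation.
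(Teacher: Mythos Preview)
The paper does not prove this theorem at all: it is quoted verbatim, without proof, from \cite{G-N-94} (see the line ``From \cite{G-N-94} we have the following theorems'' immediately preceding the statement). So there is no ``paper's own proof'' to compare against --- the author simply imports the result as background.

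Your sketch is a reasonable outline of the argument one finds in \cite{G-N-94}, and you are right that the substantive content is the localization property of the Hochschild trace, which is precisely what Geoghegan and Nicas establish there. One small remark: the cellular-approximation step (``after a small homotopy rel $X\times\{0,1\}$ \ldots we may assume $G$ cellular'') is more delicate than you suggest, since a cellular approximation can in principle create new fixed points; the cleaner route in \cite{G-N-94} is to work with the geometric index on the fixed-point set directly and identify it with the algebraic trace, rather than trying to preserve the fixed-point structure under approximation. But this is a refinement of an argument the present paper never attempts.
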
   

\begin{theorem}[one-parameter Nielsen fixed point theorem] \label{nielsen-number}
Every map homotopic to $F$ relative to $X \times \{0,1\}$ has at least $N(F)$ fixed point classes 
other than the fixed point classes which meet $X \times \{0,1\}$. In particular, if $F_{0}$ and $F_{1}$ 
are fixed point free maps, then every map homotopic to $F$ relative to $X \times \{0,1\}$ has 
at least $N(F)$ path components.
\end{theorem}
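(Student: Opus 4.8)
The plan is to derive Theorem~\ref{nielsen-number} from homotopy invariance of $R(F)$ (Theorem~\ref{invariance}) together with the additivity and localization properties of the fixed point indices $i(F,C)$. First I would observe that, since $R(F)$ is by definition the collection of $C$-components $i(F,C)$ indexed by $C \in G_\phi - G_\phi(\partial F)$, the invariance theorem gives $i(F',C) = i(F,C)$ for every such $C$ whenever $F'$ is homotopic to $F$ relative to $X\times\{0,1\}$ (applying a cellular approximation first if needed); in particular $i(F,C)\ne 0$ forces $i(F',C)\ne 0$.

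Next I would invoke the local nature of the index. After a preliminary homotopy relative to $X\times\{0,1\}$ I may assume $\mathrm{Fix}(F') = \{(x,t) : F'(x,t)=x\}$ is a compact $1$-manifold meeting $X\times\{0,1\}$ transversely, and then $i(F',C)$ is computed as a sum of contributions of exactly those components of $\mathrm{Fix}(F')$ whose Nielsen-type data lie in the class $C$; the components meeting $X\times\{0,1\}$ do not contribute to the classes $C\in G_\phi - G_\phi(\partial F)$, by the very construction of $R(F)$. Consequently, if $F'$ had no fixed point class in $C$ disjoint from $X\times\{0,1\}$, the sum defining $i(F',C)$ would be empty, so $i(F',C)=0$, contradicting $i(F',C)=i(F,C)\ne 0$. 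Hence every $F'$ homotopic to $F$ rel $X\times\{0,1\}$ has a fixed point class in $C$ not meeting $X\times\{0,1\}$; ranging over the $N(F)$ pairwise distinct classes $C$ with $i(F,C)\ne 0$ produces at least $N(F)$ such classes.

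For the second assertion, if $F_0$ and $F_1$ are fixed point free then $\mathrm{Fix}(F')\subset X\times(0,1)$, so no fixed point class meets $X\times\{0,1\}$ and $G_\phi(\partial F)=\emptyset$; since each path component of $\mathrm{Fix}(F')$ lies in a single fixed point class, the number of path components of $\mathrm{Fix}(F')$ is at least the number of nonzero indices, namely $N(F)$.

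The step I expect to be the main obstacle is the second one: the additivity and localization property of the Hochschild-homology-valued index — the assertion that $i(F',C)$ depends only on an arbitrarily small neighborhood of the fixed points in class $C$ and is additive over disjoint pieces of $\mathrm{Fix}(F')$. Establishing this is exactly the technical heart of the Geoghegan--Nicas machinery, requiring one to track the Hochschild $1$-chain $\mathrm{trace}(\tilde{\partial}_{\ast}\otimes\tilde{D}_{\ast})$ under subdivision and homotopy; once invariance and localization are available, the remainder of the argument is formal bookkeeping.
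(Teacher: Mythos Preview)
The paper does not actually prove Theorem~\ref{nielsen-number}: it is quoted from \cite{G-N-94} as background (the line ``From \cite{G-N-94} we have the following theorems'' precedes it), so there is no in-paper proof to compare against. Your sketch is the standard argument one finds in the Geoghegan--Nicas source: invariance of $R(F)$ under homotopy rel $X\times\{0,1\}$, plus the fact that each nonzero $C$-component must be supported on fixed points in the class $C$ away from the boundary. You have also correctly located the real work, namely the additivity/localization of the Hochschild-valued index, which is established in \cite{G-N-94} and not reproduced in this paper.
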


For a complete description of the one-parameter fixed point theory see \cite{G-N-94}.

\section{Semiconjugacy classes on torus}

In this subsection we describe some results about the semiconjugacy classes in the torus related to 
a homotopy $F: T \times I \to T$. We will consider the homomorphism $\phi = c_{\tau} \circ F_{\#}$ given above. 

We take $w=[(0,0)] \in T$ and $G = \pi_{1}(T,w) = \{ u, v| uvu^{-1}v^{-1}=1 \}$, where 
$u \equiv a$ and $v \equiv b$. Thus, given the homomorphism $\phi: G \to G$ we have 
$\phi(u) = u^{b_{1}} v^{b_{2}}$ and $\phi(v) = u^{b_{3}} v^{b_{4}}$. Therefore, 
$\phi(u^{m}v^{n}) = u^{mb_{1}+nb_{3}} v^{mb_{2}+nb_{4}}$, for all $m,n \in \mathbb{Z}$.
We denote this homomorphism by the matrix:
$$ [\phi] = \left ( 
\begin{array}{cc}
  b_{1} & b_{3} \\
  b_{2} & b_{4} \\
\end{array} \right )
$$

\begin{prop} \label{prop-conjugacy-classes}
Two elements  $ g_{1} = u^{m_{1}}v^{n_{1}}$ and  $ g_{2} = u^{m_{2}}v^{n_{2}} $ in $G$ belong to the same semiconjugacy class,
 if and only if there are integers $m,n$ satisfying the following equations:
$$ \left \{
\begin{array}{c}
 m(b_{1}-1)+nb_{3} = m_{2} - m_{1} \\
 mb_{2}+n(b_{4}-1) = n_{2} - n_{1} \\
 \end{array} \right. 
$$ 
\end{prop}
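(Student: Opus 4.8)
The plan is to unwind the definition of the semiconjugacy relation on $G_{\phi}$ and translate it into the stated system of linear Diophantine equations. Recall that two elements $g_{1}, g_{2} \in G$ are semiconjugate when there is some $h \in G$ with $g_{2} = h\, g_{1}\, \phi(h)^{-1}$; this is the equivalence relation whose classes form $G_{\phi}$. So first I would write $h = u^{m} v^{n}$ for unknown integers $m, n$ (using that $G$ is free abelian on $u, v$, every element has this normal form), and substitute $g_{1} = u^{m_{1}} v^{n_{1}}$, $g_{2} = u^{m_{2}} v^{n_{2}}$ into the equation $g_{2} = h\, g_{1}\, \phi(h)^{-1}$.

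Next I would compute $\phi(h)^{-1}$ using the formula already recorded in the excerpt: since $\phi(u^{m} v^{n}) = u^{m b_{1} + n b_{3}} v^{m b_{2} + n b_{4}}$, we get $\phi(h)^{-1} = u^{-(m b_{1} + n b_{3})} v^{-(m b_{2} + n b_{4})}$. Because $G$ is abelian, the product $h\, g_{1}\, \phi(h)^{-1}$ collapses to $u^{m + m_{1} - m b_{1} - n b_{3}}\, v^{n + n_{1} - m b_{2} - n b_{4}}$. Setting this equal to $u^{m_{2}} v^{n_{2}}$ and equating exponents (again using freeness of the abelian group on $u, v$, so normal forms are unique) yields
$$ m + m_{1} - m b_{1} - n b_{3} = m_{2}, \qquad n + n_{1} - m b_{2} - n b_{4} = n_{2}, $$
which rearranges exactly to
$$ m(b_{1} - 1) + n b_{3} = m_{2} - m_{1}, \qquad m b_{2} + n(b_{4} - 1) = n_{2} - n_{1}. $$
This gives both implications at once: $g_{1}$ and $g_{2}$ are semiconjugate iff such an $h$ exists iff integers $m, n$ solving the system exist.

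There is essentially no hard step here; the only things to be careful about are bookkeeping — making sure the direction of the semiconjugacy (whether it is $h g_{1} \phi(h)^{-1}$ or $\phi(h) g_{1} h^{-1}$, which matters for the sign pattern but not for existence since one can replace $h$ by its inverse) matches the convention from \cite{G-N-94} used elsewhere in the paper, and invoking commutativity of $G$ at the right moments to merge the three factors. I would state explicitly that $G \cong \mathbb{Z}^{2}$ so that the normal form $u^{m} v^{n}$ is unique, which is what licenses passing from an equality in $G$ to the pair of integer equations. The ``obstacle,'' such as it is, is purely notational: keeping the four entries $b_{1}, b_{2}, b_{3}, b_{4}$ of $[\phi]$ in their correct rows and columns when expanding $\phi(h)$.
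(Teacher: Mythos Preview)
Your approach is exactly the paper's: write the conjugating element as $u^{m}v^{n}$, expand using abelianity and the formula for $\phi$, and compare exponents. One small slip: with your convention $g_{2}=h\,g_{1}\,\phi(h)^{-1}$ the first displayed equation actually rearranges to $m(b_{1}-1)+nb_{3}=m_{1}-m_{2}$ (and similarly for the second), not $m_{2}-m_{1}$; the paper uses $g_{1}=g\,g_{2}\,\phi(g)^{-1}$, which gives the stated signs directly, but as you yourself observe the substitution $h\mapsto h^{-1}$ makes the two systems equivalent for purposes of existence.
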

\begin{proof}
If there is $g = u^{m} v^{n} \in G$ satisfying $g_{1} =g g_{2} \phi(g)^{-1}$ then we obtain the equation of the proposition. 
The other direction is analogous.
\qed
\end{proof}

We take the isomorphism $\Theta : G \to \mathbb{Z} \times \mathbb{Z}$ such that $\Theta(u^{m}v^{n}) = (m, n)$.
By Proposition \ref{prop-conjugacy-classes} two elements $ g_{1} = u^{m_{1}}v^{n_{1}}$ and  $ g_{2} = u^{m_{2}}v^{n_{2}} $ in $G$
 belong to the same semiconjugacy class, if and only if 
there is $z \in \mathbb{Z} \times \mathbb{Z}$ satisfying:  
$([\phi]-I)z = \Theta(g_{2}g_{1}^{-1})$, where 
$I$ is the identity matrix. If determinant of the matrix $([\phi]-I) $ is zero then will have an infinite amount of elements 
in a semiconjugacy class.

\begin{corollary} \label{corol-2} For each $g \in G$ the semicentralizer $Z(g)$ is isomorphic to the kernel of $[\phi]-I$.
\end{corollary}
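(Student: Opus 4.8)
The plan is to unwind the definition of the semicentralizer $Z(g)$ and translate it, via the isomorphism $\Theta: G \to \mathbb{Z}\times\mathbb{Z}$, into a linear-algebra statement about the matrix $[\phi]-I$. Recall that $Z(g)$ consists of those $h \in G$ with $h g \phi(h)^{-1} = g$. First I would rewrite this condition: $h g \phi(h)^{-1} = g$ is equivalent to $h g = g \phi(h)$, and since $G$ is abelian (it is $\mathbb{Z}\times\mathbb{Z}$), this collapses to $h = \phi(h)$, i.e. $h \phi(h)^{-1} = 1$. So $Z(g) = \{h \in G : \phi(h) = h\}$, which is manifestly independent of $g$ — already matching the statement's phrasing ``for each $g$''.

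Next I would push this through $\Theta$. Writing $h = u^m v^n$ and $z = \Theta(h) = (m,n) \in \mathbb{Z}\times\mathbb{Z}$, the condition $\phi(h) = h$ becomes $[\phi]z = z$, that is $([\phi]-I)z = 0$. Hence $\Theta$ restricts to an isomorphism from $Z(g)$ onto $\ker([\phi]-I) \subseteq \mathbb{Z}\times\mathbb{Z}$. This is essentially the same computation that underlies Proposition \ref{prop-conjugacy-classes} and the remark following it: there, two elements are semiconjugate iff $([\phi]-I)z = \Theta(g_2 g_1^{-1})$ has a solution $z$; here we are looking at the homogeneous version, which is exactly the kernel. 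In fact one can derive the corollary directly from Proposition \ref{prop-conjugacy-classes} by taking $g_1 = g_2 = g$: the set of pairs $(m,n)$ solving the resulting homogeneous system is precisely $\ker([\phi]-I)$, and this set of $(m,n)$'s corresponds under $\Theta^{-1}$ to the elements $h = u^m v^n$ that semiconjugate $g$ to itself, i.e. to $Z(g)$.

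The only genuinely substantive point — and the one I would take care to state clearly rather than the main obstacle — is the reduction $h g \phi(h)^{-1} = g \iff \phi(h) = h$, which uses commutativity of $G$ in an essential way; on a non-abelian group the semicentralizer would genuinely depend on $g$. Everything else is a routine transport of structure along the isomorphism $\Theta$, together with the observation that $\Theta$ is an isomorphism of abelian groups so it carries subgroups to subgroups and the subgroup $Z(g)$ to the subgroup $\ker([\phi]-I)$. I expect no real obstacle: the proof is two or three lines once the definition of $Z(g)$ is spelled out and commutativity is invoked.
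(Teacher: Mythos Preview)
Your proposal is correct and is exactly the argument the paper has in mind: the corollary is stated without proof immediately after Proposition~\ref{prop-conjugacy-classes}, and your derivation---specializing that proposition to $g_{1}=g_{2}=g$ so that the system becomes homogeneous, then identifying the solution set with $\ker([\phi]-I)$ via $\Theta$---is precisely the intended one-line deduction. Your additional observation that commutativity of $G$ makes $Z(g)$ independent of $g$ is the right point to emphasize.
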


\begin{lemma} \label{lemma-1}
The 1-chain,  $u^{k} v^{l} \otimes u^{m} v^{n}$, is a cycle if and only if the element $(k,l) \in \mathbb{Z} \times \mathbb{Z}$ 
belongs to the kernel of $[\phi]-I$.
\end{lemma}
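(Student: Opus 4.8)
The plan is to compute the Hochschild boundary of the generator $u^{k}v^{l}\otimes u^{m}v^{n}$ explicitly and to read off exactly when it vanishes. Recall from Section 2 that the boundary operator on $1$-chains is the one underlying the formula $\mathrm{trace}(\tilde{D_{\ast}}\phi(\tilde{\partial_{\ast}})-\tilde{\partial_{\ast}}\tilde{D_{\ast}})$ for the boundary of $\mathrm{trace}(\tilde{\partial_{\ast}}\otimes\tilde{D_{\ast}})$: on a single generator $x\otimes y$, with $x$ in the slot of $\tilde{\partial_{\ast}}$ and $y$ in the slot of $\tilde{D_{\ast}}$, it sends $x\otimes y$ to the $0$-chain $y\,\phi(x)-x\,y\in\mathbb{Z}G$. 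Taking $x=u^{k}v^{l}$ and $y=u^{m}v^{n}$, the boundary of the $1$-chain in the statement is
$$ u^{m}v^{n}\,\phi(u^{k}v^{l})\ -\ u^{k}v^{l}\,u^{m}v^{n}\ \in\ \mathbb{Z}G. $$

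Since the difference $g-h$ of two elements of $G$ is the zero element of $\mathbb{Z}G$ if and only if $g=h$ in $G$, the $1$-chain $u^{k}v^{l}\otimes u^{m}v^{n}$ is a cycle precisely when $u^{m}v^{n}\,\phi(u^{k}v^{l})=u^{k}v^{l}\,u^{m}v^{n}$ in $G$. Here I would use that $G=\pi_{1}(T,w)$ is abelian: cancelling $u^{m}v^{n}$, the cycle condition collapses to $\phi(u^{k}v^{l})=u^{k}v^{l}$, i.e.\ to $(k,l)$ being a fixed vector of $\phi$. By the formula $\phi(u^{k}v^{l})=u^{kb_{1}+lb_{3}}\,v^{kb_{2}+lb_{4}}$ this is the pair of equations $(b_{1}-1)k+b_{3}l=0$ and $b_{2}k+(b_{4}-1)l=0$, which, after transport along the isomorphism $\Theta\colon G\to\mathbb{Z}\times\mathbb{Z}$, says exactly that $([\phi]-I)(k,l)^{\mathsf T}=0$, that is, $(k,l)\in\ker([\phi]-I)$. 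Every step is an equivalence, so both implications are proved simultaneously.

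The computation itself is routine; the only real obstacle is the bookkeeping, namely keeping track of which tensor factor carries $\phi$ in the boundary formula so that the kernel condition is imposed on $(k,l)$ and not on $(m,n)$. As a consistency check I would observe that the system $(b_{1}-1)k+b_{3}l=0$, $b_{2}k+(b_{4}-1)l=0$ obtained above is precisely the system of Proposition \ref{prop-conjugacy-classes} specialised to $g_{1}=g_{2}$, and that by Corollary \ref{corol-2} this is the description of $\ker([\phi]-I)\cong Z(g)$, in agreement with the identification $HH_{1}(\mathbb{Z}G,(\mathbb{Z}G)^{\phi})_{C}\cong H_{1}(Z(g_{C}))$ used in the definition of $R(F)$.
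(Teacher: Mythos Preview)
Your proof is correct and follows essentially the same route as the paper: both compute the Hochschild boundary $d_{1}(u^{k}v^{l}\otimes u^{m}v^{n})=u^{m}v^{n}\phi(u^{k}v^{l})-u^{k}v^{l}u^{m}v^{n}$ and read off the conditions $k(b_{1}-1)+lb_{3}=0$, $kb_{2}+l(b_{4}-1)=0$ from vanishing. Your use of the abelianness of $G$ to cancel $u^{m}v^{n}$ and reduce directly to $\phi(u^{k}v^{l})=u^{k}v^{l}$ is a slight streamlining, but the argument is the same.
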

\begin{proof} 
If $u^{k} v^{l} \otimes u^{m} v^{n} $ is a cycle, then 
$0 = d_{1}( u^{k} v^{l} \otimes u^{m} v^{n} ) = $
$ u^{m} v^{n} \phi(u^{k} v^{l}) - u^{k} v^{l} u^{m} v^{n} = $ 
$ u^{m} v^{n} u^{kb_{1}+lb_{3}} v^{kb_{2}+lb_{4}} - u^{k} v^{l} u^{m} v^{n} = $ 
$u^{m + kb_{1}+lb_{3}} v^{kb_{2}+lb_{4}+n} - $ \break $ u^{k+m} v^{l+n} $. This implies 
$k(b_{1}-1)+lb_{3} = 0$ and $kb_{2}+l(b_{4}-1) = 0$. 
The other direction is analogous. \qed 
\end{proof}

\begin{corollary} \label{corollary-1}
If the matrix of the homomorphism $\phi$ is given by $$ [\phi] = \left ( 
\begin{array}{cc}
  1 & b_{3} \\
  0 & b_{4} \\
\end{array} \right )
$$ with $b_{3} \neq 0$ or $ b_{4} \neq 1$, then the 1-chain, $u^{k} v^{l} \otimes u^{m} v^{n} $, is a cycle if and only if 
$l = 0$.
\end{corollary}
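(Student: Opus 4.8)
The plan is to derive the statement directly from Lemma~\ref{lemma-1}. That lemma identifies the cycles of the form $u^{k}v^{l}\otimes u^{m}v^{n}$ with the pairs $(k,l)\in\mathbb{Z}\times\mathbb{Z}$ lying in $\ker([\phi]-I)$, so everything reduces to computing this kernel for the given $[\phi]$.

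First I would write down
\[
[\phi]-I=\left(\begin{array}{cc} 0 & b_{3}\\ 0 & b_{4}-1\end{array}\right),
\]
so that applying $[\phi]-I$ to the column vector $(k,l)$ produces $(b_{3}l,\,(b_{4}-1)l)$. Hence $(k,l)\in\ker([\phi]-I)$ if and only if both $b_{3}l=0$ and $(b_{4}-1)l=0$, with $k$ unconstrained. Next I would invoke the hypothesis: $b_{3}\neq 0$ or $b_{4}\neq 1$ means at least one of the integers $b_{3}$, $b_{4}-1$ is nonzero, and in $\mathbb{Z}$ a product $c\,l$ with $c\neq 0$ vanishes exactly when $l=0$. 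Therefore the kernel condition above is equivalent to $l=0$ (with $k$ arbitrary). Feeding this back into Lemma~\ref{lemma-1} gives the corollary.

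This is a short linear-algebra computation rather than a substantial argument, so I do not expect a genuine obstacle. The only point worth a moment's care is that the relevant nondegeneracy hypothesis here is ``$b_{3}\neq 0$ or $b_{4}\neq 1$'' and not ``$\det([\phi]-I)\neq 0$'': indeed the latter determinant is always $0$ for a matrix of this upper-triangular shape, so it is precisely the stated condition that prevents the kernel from containing vectors with $l\neq 0$.
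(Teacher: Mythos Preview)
Your proposal is correct and matches the paper's approach: the paper states this result as an immediate corollary of Lemma~\ref{lemma-1} without a separate proof, and your computation of $\ker([\phi]-I)$ is exactly the intended verification. There is nothing to add.
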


\bigskip

By definition given a 2-chain $ u^{s}v^{t} \otimes u^{k}v^{l} \otimes u^{m}v^{n} \in C_{2}(\mathbb{Z}G, (\mathbb{Z}G)^{\phi})_{C}$ then 
$$\begin{array}{l}
d_{2}(u^{s}v^{t} \otimes u^{k}v^{l} \otimes u^{m}v^{n}) 
 = u^{k}v^{l} \otimes u^{m+sb_{1}+tb_{3}} v^{n+sb_{2}+tb_{4}} -u^{k+s}v^{l+t} \otimes u^{m}v^{n}  \\
 + u^{s}v^{t} \otimes u^{k+m}v^{l+n}. \\
\end{array}$$

\begin{prop} \label{prop-1}
The 1-chain, $u^{k} \otimes u^{m} v^{n} \in C_{1}(\mathbb{Z}G,(\mathbb{Z}G)^{\phi})_{C}$, is homologous to the 1-chain, 
$k u \otimes u^{m+k-1} v^{n} $, for all $k,m,n \in \mathbb{Z}$. 
\end{prop}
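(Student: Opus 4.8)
The plan is to prove Proposition \ref{prop-1} by induction on $k$, using the explicit formula for $d_2$ stated just before the proposition to produce the requisite boundaries. For $k = 1$ the statement is the identity $u \otimes u^m v^n = 1\cdot u \otimes u^{m} v^{n}$, so there is nothing to prove. For $k = 0$ one checks directly that $u^0 \otimes u^m v^n = 1 \otimes u^m v^n$ is a boundary (apply $d_2$ to $1 \otimes 1 \otimes u^m v^n$, or note it is degenerate), which matches $0\cdot u \otimes u^{m-1} v^n = 0$. The heart of the matter is the inductive step relating the chain with exponent $k$ to the chain with exponent $k+1$ (and, symmetrically, $k-1$ for negative $k$).

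For the inductive step I would feed a well-chosen $2$-chain into the boundary formula. Concretely, consider a $2$-chain of the form $u^{s} v^{t} \otimes u^{k'} v^{l'} \otimes u^{m} v^{n}$ with $s,t$ and $k',l'$ chosen so that the three boundary terms are: the chain $u^{k} \otimes (\text{shifted word})$, the chain $u^{k+1} \otimes (\text{shifted word})$, and a term of the form $u \otimes (\text{shifted word})$. Applying $d_2$ to $u \otimes u^{k} \otimes u^{m} v^{n}$ is the natural candidate: its boundary is
$$
d_2\bigl(u \otimes u^{k} \otimes u^{m} v^{n}\bigr) = u^{k} \otimes u^{m + b_1} v^{n + b_2} - u^{k+1} \otimes u^{m} v^{n} + u \otimes u^{k+m} v^{n}.
$$
Since this is a boundary, it is homologous to zero, which gives $u^{k+1} \otimes u^m v^n \sim u^k \otimes u^{m+b_1} v^{n+b_2} + u \otimes u^{k+m} v^{n}$ in homology. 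There is a subtlety here: the terms must all lie in the same $C$-component, and the word $u^{m+b_1}v^{n+b_2}$ differs from $u^m v^n$; but the bimodule twist by $\phi$ is exactly what makes these lie in the same semiconjugacy class, so this is internally consistent with the grading. One then needs to reconcile the shift $u^m v^n \mapsto u^{m+b_1}v^{n+b_2}$ with the clean statement of the proposition, which suggests that the intended argument works inside a fixed $C$-component where, after normalizing, such shifts are absorbed — or, more likely, that the relevant homotopies under consideration have $\phi$ of the restricted form in Corollary \ref{corollary-1}, so that $b_1 = 1$, $b_2 = 0$ and the shift disappears.

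Assuming that normalization, the induction collapses to: $u^{k+1} \otimes u^m v^n \sim u^k \otimes u^{m} v^{n} + u \otimes u^{k+m} v^{n}$. By the inductive hypothesis $u^k \otimes u^m v^n \sim k\,u \otimes u^{m+k-1}v^n$, so
$$
u^{k+1} \otimes u^m v^n \sim k\,u \otimes u^{m+k-1} v^{n} + u \otimes u^{k+m} v^{n} = (k+1)\,u \otimes u^{m+k} v^{n},
$$
which is exactly the claim for $k+1$. For negative $k$ one runs the same computation in reverse, feeding $u \otimes u^{k-1} \otimes u^m v^n$ (or $u^{-1} \otimes \cdots$) into $d_2$ and solving for the lower-exponent chain; the only care needed is tracking signs and the shifts of $m,n$.

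The main obstacle I anticipate is bookkeeping the $\phi$-twist: making sure every intermediate $1$-chain genuinely lies in $C_1(\mathbb{Z}G,(\mathbb{Z}G)^\phi)_C$ for the single fixed component $C$, and that the word appearing in the right tensor slot after each application of $d_2$ is correctly shifted by the appropriate power of $[\phi]$. If the proposition is meant in full generality (arbitrary $[\phi]$) rather than under the Corollary \ref{corollary-1} hypothesis, then the clean form $k\,u \otimes u^{m+k-1}v^n$ can only hold after additionally using Lemma \ref{lemma-1} or Corollary \ref{corollary-1} to discard cycles that are homologous to zero; identifying precisely which auxiliary cycles get killed, and checking that the $v$-exponent really does stay at $n$ throughout, is the delicate point. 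Everything else is a routine unwinding of the displayed $d_2$ formula.
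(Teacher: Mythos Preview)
Your approach is essentially the paper's: induction on $k$ using the explicit $d_{2}$ formula, with the base cases $k=0,1$ handled directly. The only variation is the choice of $2$-chain: the paper applies $d_{2}$ to $u^{s}\otimes u\otimes u^{m}v^{n}$ (variable exponent in the \emph{first} slot) and invokes the inductive hypothesis on the third boundary term $u^{s}\otimes u^{m+1}v^{n}$, whereas you apply $d_{2}$ to $u\otimes u^{k}\otimes u^{m}v^{n}$ (variable exponent in the middle) and invoke it on the first boundary term. Both choices work and the arithmetic is the same.

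There is, however, a bookkeeping slip in your write-up. After you set $b_{1}=1$, $b_{2}=0$, the first boundary term is $u^{k}\otimes u^{m+1}v^{n}$, not $u^{k}\otimes u^{m}v^{n}$. Consequently your displayed line
\[
k\,u\otimes u^{m+k-1}v^{n}+u\otimes u^{k+m}v^{n}=(k+1)\,u\otimes u^{m+k}v^{n}
\]
is false as a chain identity. With the correct shift, the inductive hypothesis applied to $u^{k}\otimes u^{m+1}v^{n}$ yields $k\,u\otimes u^{(m+1)+k-1}v^{n}=k\,u\otimes u^{m+k}v^{n}$, and then the sum $(k+1)\,u\otimes u^{m+k}v^{n}$ is honest. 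Finally, your suspicion about the $\phi$-twist is on the mark: the paper's own computation writes the first boundary term of $d_{2}(u^{s}\otimes u\otimes u^{m}v^{n})$ as $u\otimes u^{m+s}v^{n}$, i.e.\ it too is tacitly assuming $b_{1}=1$, $b_{2}=0$, so the proposition is indeed intended under that normalization rather than for arbitrary $[\phi]$.
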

\begin{proof} 
Note that for $k= 0$ and $1$ the proposition is true. We suppose that  
for some $s > 0 \in \mathbb{Z}$, the 1-chain $u^{s} \otimes u^{m} v^{n}$  
is homologous to the 1-chain $ s u \otimes u^{m+s-1} v^{n} $, we will write $u^{s} \otimes u^{m} v^{n}$  
$ \sim s u \otimes u^{m+s-1} v^{n} $. Considering to the 2-chain
 $u^{s} \otimes u \otimes u^{m}v^{n} \in C_{2}(\mathbb{Z}G,(\mathbb{Z}G)^{\phi})$ we have  
$$
\begin{array}{lll}
 d_{2}(u^{s} \otimes u \otimes u^{m}v^{n} ) 
 & = &  u \otimes u^{m+s}v^{n} - 
 u^{s+1} \otimes  u^{m}v^{n} + u^{s} \otimes  u^{1+m}v^{n} \\
 & \sim &  u \otimes u^{m+s}v^{n} - 
 u^{s+1} \otimes  u^{m}v^{n} + su \otimes  u^{1+m+s-1}v^{n} \\
 & = & (s+1)u \otimes  u^{m+(s+1)-1}v^{n} - u^{s+1} \otimes  u^{m}v^{n}.  \\
\end{array}
$$
Therefore $(s+1)u \otimes  u^{m+(s+1)-1}v^{n} \sim u^{s+1} \otimes  u^{m}v^{n}.$  Using induction, we obtain the result. 
The case in which $k < 0$ is analogous. \qed
\end{proof}

\bigskip

The proof of following results can be found in \cite{S-14}.

\begin{prop}  \label{prop-2}
In the case $b_{1} =1$ and $b_{2} = 0$ each 1-cycle $\displaystyle \sum^{t}_{i=1} a_{i} u^{k_{i}} v^{l_{i}} \otimes u^{m_{i}} v^{n_{i}} \in C(\mathbb{Z}G, (\mathbb{Z}G)^{\phi}) $ is homologous 
to a 1-cycle the following form: $\displaystyle \sum^{\bar{t}}_{i=1} \bar{a_{i}} u \otimes u^{\bar{m_{i}}} v^{\bar{n_{i}}}$.
\end{prop}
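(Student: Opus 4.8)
The plan is to take an arbitrary $1$-cycle and slide it across boundaries until only terms whose first tensor factor equals $u$ survive. Since $b_1=1$ and $b_2=0$, the homomorphism $\phi$ fixes every power of $u$, and $G$ is abelian; plugging these two facts into the displayed formula for $d_2$ produces three homology relations in $C(\mathbb{Z}G,(\mathbb{Z}G)^{\phi})$ (write $\sim$ for ``homologous''). (i) Evaluating $d_2$ on $u^k\otimes v^l\otimes u^m v^n$ gives the derivation relation $u^k v^l\otimes u^m v^n\sim v^l\otimes u^{m+k}v^n+u^k\otimes u^m v^{n+l}$. (ii) Each $u^k\otimes u^m v^n$ is a cycle, since $d_1(u^k\otimes h)=h\phi(u^k)-u^k h=h u^k-u^k h=0$, and by Proposition \ref{prop-1} it is homologous to $k\,u\otimes u^{m+k-1}v^n$, which is already of the desired shape. (iii) Evaluating $d_2$ on $v^{\pm1}\otimes v^{l\mp1}\otimes h$, and using that $e\otimes h=d_2(e\otimes e\otimes h)$ is a boundary, one shows that every $v^l\otimes h$ with $l\neq0$ is homologous to a $\mathbb{Z}$-linear combination of chains of the form $v\otimes h'$ (for $l>0$ one pushes the first factor down to $v^1$; for $l<0$ one pushes it up to $v^{-1}$ and then applies $v^{-1}\otimes h\sim-\,v\otimes u^{-b_3}v^{-b_4-1}h$).

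Now let $z=\sum_i a_i\,u^{k_i}v^{l_i}\otimes u^{m_i}v^{n_i}$ be a $1$-cycle. Applying (i) to each term and then (ii) to the resulting $u^{k_i}\otimes(\cdots)$ summands gives
$$ z\ \sim\ z''+z',\qquad z''=\sum_i a_i k_i\,u\otimes u^{k_i+m_i-1}v^{n_i+l_i},\qquad z'=\sum_i a_i\,v^{l_i}\otimes u^{m_i+k_i}v^{n_i}. $$
The chain $z''$ is of the required form and is a cycle (each $u\otimes(\cdots)$ is), so $z'=z-z''$ is a cycle too. Dropping from $z'$ the summands with $l_i=0$, each of which is a boundary of the type $e\otimes(\cdots)$, and rewriting the rest by (iii), one obtains a cycle $w=\sum_j b_j\,v\otimes g_j$ with $z'\sim w$.

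It remains to prove that $w$ is a boundary, i.e.\ that $w=0$ after collecting equal summands; this is the crux. Since $G$ is abelian,
$$ d_1 w=\sum_j b_j\bigl(g_j\phi(v)-v g_j\bigr)=\Bigl(\sum_j b_j g_j\Bigr)\bigl(u^{b_3}v^{b_4}-v\bigr), $$
so the condition $d_1 w=0$ reads $\bigl(\sum_j b_j g_j\bigr)(u^{b_3}v^{b_4}-v)=0$ in $\mathbb{Z}G\cong\mathbb{Z}[u^{\pm1},v^{\pm1}]$. This ring is an integral domain, and $u^{b_3}v^{b_4}-v\neq0$ precisely when $[\phi]\neq I$, i.e.\ when $b_3\neq0$ or $b_4\neq1$ --- the hypothesis already in force in Corollary \ref{corollary-1}, and the one under which Corollary \ref{corol-2} identifies the relevant semicentralizer with $\langle u\rangle\cong\mathbb{Z}$. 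Hence $\sum_j b_j g_j=0$, all the $b_j$ cancel, and $w=0$. Therefore $z\sim z''$, which after collecting terms has the claimed form $\sum_i\bar a_i\,u\otimes u^{\bar m_i}v^{\bar n_i}$.

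The bookkeeping in (i)--(iii) and in the two reduction steps is routine; the genuine obstacle is the last paragraph, namely the realization that the ``$v$-part'' of a cycle must vanish. The integral-domain computation is the quickest way to see it; conceptually it reflects the isomorphism $HH_1(\mathbb{Z}G,(\mathbb{Z}G)^{\phi})_C\cong H_1(Z(g_C))$ of \cite{G-N-94} together with Corollary \ref{corol-2}, which say that each $C$-summand of $HH_1$ is infinite cyclic and generated by the class of a chain with first factor $u$. Making that abstract fact explicit still requires the reduction above, which is the route I would take.
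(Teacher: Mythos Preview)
Your argument is correct. The paper does not prove this proposition here; it defers to \cite{S-14}, so there is no in-paper proof to compare against directly. Your route---splitting each term via $d_2(u^k\otimes v^l\otimes u^m v^n)$, invoking Proposition~\ref{prop-1} on the $u^k\otimes(\cdots)$ part, reducing the $v^l\otimes(\cdots)$ part to a $v\otimes(\cdots)$ chain, and then killing that chain by the integral-domain computation in $\mathbb{Z}[u^{\pm1},v^{\pm1}]$---is clean and self-contained.

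One point worth making explicit: your final step genuinely needs $[\phi]\neq I$, i.e.\ $(b_3,b_4)\neq(0,1)$, since otherwise $u^{b_3}v^{b_4}-v=0$ and the cancellation fails. You flag this as ``the hypothesis already in force in Corollary~\ref{corollary-1}'', which is fair given how the result is used in the paper (the case $[\phi]=I$ is handled separately in the proof of Theorem~\ref{maintheorem}), but the proposition as literally stated only assumes $b_1=1$, $b_2=0$. Under that weaker hypothesis the statement is actually false: when $\phi=\mathrm{id}$ the cycle $v\otimes 1$ represents a nonzero class not hit by any $\sum\bar a_i\,u\otimes u^{\bar m_i}v^{\bar n_i}$ (think of the Hochschild--K\"ahler identification $HH_1(R)\cong\Omega^1_R$ for $R=\mathbb{Z}[u^{\pm1},v^{\pm1}]$, where $dv$ is not an $R$-multiple of $du$). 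So your added hypothesis is not just convenient but necessary, and you should state it up front rather than invoke it at the end.
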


\begin{prop} \label{prop-3}
Each 1-cycle $u \otimes u^{m} v^{n} \in HH_{1}(\mathbb{Z}G,(\mathbb{Z}G)^{\phi})_{C}$ is not trivial, that is, is not homologous to zero.
\end{prop}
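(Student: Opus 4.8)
The plan is to compute the class of $u\otimes u^{m}v^{n}$ under the isomorphism $HH_{1}(\mathbb{Z}G,(\mathbb{Z}G)^{\phi})_{C}\cong H_{1}(Z(g_{C}))$ recorded in Section~2 and to see that it corresponds to a nonzero element of $\ker([\phi]-I)$.

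First I would observe that, since $u\otimes u^{m}v^{n}$ is by hypothesis a $1$-cycle, Lemma~\ref{lemma-1} forces $(1,0)\in\ker([\phi]-I)$, i.e. $b_{1}=1$ and $b_{2}=0$; in particular $\phi(u)=u$. Because $G$ is abelian one has $h\in Z(g)\iff\phi(h)=h$ for every $g\in G$, so under $\Theta$ the semicentralizer $Z(g)$ equals $\ker([\phi]-I)$, independently of $g$ (this is the content of Corollary~\ref{corol-2}), and this group contains $u=(1,0)$. The element $u\otimes u^{m}v^{n}$ lies in the $C$-component for $C$ the semiconjugacy class of the module entry $u^{m}v^{n}$, and I would take $g_{C}:=u^{m}v^{n}$ itself as the representative of $C$, so that $Z(g_{C})=\ker([\phi]-I)\ni u$.

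Next I would make the identification $HH_{1}(\mathbb{Z}G,(\mathbb{Z}G)^{\phi})_{C}\cong H_{1}(Z(g_{C}))$ explicit. Regard $HH_{*}(\mathbb{Z}G,(\mathbb{Z}G)^{\phi})$ as the homology of $G$ with coefficients in $\mathbb{Z}G$ equipped with the $\phi$-twisted conjugation action $g\cdot x=g\,x\,\phi(g)^{-1}$; the $C$-summand of this coefficient module is the permutation module on $G/Z(g_{C})$, where the coset $hZ(g_{C})$ corresponds to $h\,g_{C}\,\phi(h)^{-1}\in C$, so that $g_{C}$ is the trivial coset. Shapiro's lemma then identifies $H_{*}\bigl(G;\mathbb{Z}[G/Z(g_{C})]\bigr)$ with $H_{*}(Z(g_{C}))$, and at the level of $1$-chains a cycle of the form $\gamma\otimes g_{C}$ with $\gamma\in Z(g_{C})$ is sent to the class of $\gamma$ in $Z(g_{C})$, which is exactly $H_{1}(Z(g_{C}))$ since $Z(g_{C})\subseteq\mathbb{Z}\times\mathbb{Z}$ is already abelian. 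Taking $\gamma=u$ shows that $u\otimes u^{m}v^{n}$ maps to $(1,0)\in\ker([\phi]-I)$; as $(1,0)\neq 0$, the cycle $u\otimes u^{m}v^{n}$ is not homologous to zero, which is the assertion.

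The delicate point is the chain-level form of the Geoghegan--Nicas isomorphism \cite{G-N-94}, specifically the verification that $u\otimes u^{m}v^{n}$, with $u^{m}v^{n}$ read as the distinguished representative $g_{C}$ of its own semiconjugacy class, really represents the class of $u$; once this is granted the rest of the argument is formal. A self-contained alternative would be to exhibit directly a $1$-cocycle on $C_{1}(\mathbb{Z}G,(\mathbb{Z}G)^{\phi})_{C}$ taking the value $1$ on $u\otimes u^{m}v^{n}$ and vanishing on the image of $d_{2}$; but the naive candidate (extract the exponent of $u$ in the first tensor slot) is annihilated by $d_{2}$ only on chains whose second slot carries no power of $v$, so one is forced to first use Propositions~\ref{prop-1} and~\ref{prop-2} to reduce an arbitrary $1$-cycle of the $C$-component to a combination of the $u\otimes u^{p}v^{q}$, define the functional on these normal forms, and then check that it is well defined — which amounts to re-deriving the isomorphism above by hand.
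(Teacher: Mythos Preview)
The paper does not actually prove Proposition~\ref{prop-3}; it defers the argument to \cite{S-14}. So there is no in-paper proof to compare against.

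Your approach is sound and is the natural conceptual proof. The chain-level identification you flag as delicate can be verified directly: if $\gamma\in Z(g_{C})$ then by definition $\gamma g_{C}\phi(\gamma)^{-1}=g_{C}$, i.e.\ $g_{C}\phi(\gamma)=\gamma g_{C}$, so $d_{1}(\gamma\otimes g_{C})=g_{C}\phi(\gamma)-\gamma g_{C}=0$ in the paper's convention (cf.\ the proof of Lemma~\ref{lemma-1}), and the assignment $[\gamma]\mapsto[\gamma\otimes g_{C}]$ is exactly the Shapiro map $H_{1}(Z(g_{C}))\to H_{1}\bigl(G;\mathbb{Z}[G/Z(g_{C})]\bigr)\cong HH_{1}(\mathbb{Z}G,(\mathbb{Z}G)^{\phi})_{C}$, which is an isomorphism. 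Taking $\gamma=u$ and $g_{C}=u^{m}v^{n}$ gives the image $(1,0)\neq 0$ in $\ker([\phi]-I)\subseteq\mathbb{Z}\times\mathbb{Z}$, and the result follows. This also immediately yields Corollary~\ref{corollary-2}, since composing with $j_{C}$ sends that class to $[u]\in H_{1}(G)$.

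Your alternative suggestion---building an explicit cocycle after normalizing via Propositions~\ref{prop-1} and~\ref{prop-2}---is likely closer in spirit to the computational argument in \cite{S-14}, but the route through the Geoghegan--Nicas/Shapiro isomorphism you took is cleaner and requires no further work once the chain-level map is pinned down.
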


\begin{corollary} \label{corollary-2}
Let $\displaystyle \sum_{i=1}^{t} u \otimes u^{m_{i}} v^{n_{i}} \in HH_{1}(\mathbb{Z}G,(\mathbb{Z}G)^{\phi}), 
\, m_{i}, n_{i} \in \mathbb{Z}$ be a cycle. If the cycles $u \otimes u^{m_{i}} v^{n_{i}} $ and $u \otimes u^{m_{j}} v^{n_{j}}$ are 
in different semiconjugacy classes for $i \neq j$, $i,j \in \{1, ... , t \}$, then $\displaystyle \sum_{i=1}^{t} u \otimes u^{m_{i}} v^{n_{i}}$is a nontrivial cycle. Each cycle $u \otimes u^{m_{i}}v^{n_{i}}$ projects to the class $[u]$ that is one of the two generators of $H_{1}(G).$
\end{corollary}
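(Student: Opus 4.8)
The plan is to reduce the statement to the splitting of the twisted Hochschild homology along semiconjugacy classes,
$$HH_{1}(\mathbb{Z}G,(\mathbb{Z}G)^{\phi})\;\cong\;\bigoplus_{C\in G_{\phi}}HH_{1}(\mathbb{Z}G,(\mathbb{Z}G)^{\phi})_{C}\;\cong\;\bigoplus_{C\in G_{\phi}}H_{1}(Z(g_{C})),$$
which is recorded in Section~2, together with Proposition~\ref{prop-3}.

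First I would record the elementary fact that on the bar complex this grading sends a generator $u^{k}v^{l}\otimes u^{m}v^{n}$ to the semiconjugacy class of the product $u^{k+m}v^{l+n}$ of its tensor factors, and that $d_{1}$ and $d_{2}$ are homogeneous for it: indeed, since $G$ is abelian, Proposition~\ref{prop-conjugacy-classes} says that two elements are semiconjugate exactly when their difference lies in the image of $[\phi]-I$, and every pair of group elements occurring in $d_{1}$ or $d_{2}$ of a homogeneous generator differs by such an element. Hence a $1$-chain is a cycle (resp.\ a boundary) if and only if each of its $C$-graded pieces is. Now the hypothesis that the chains $u\otimes u^{m_{i}}v^{n_{i}}$, $1\le i\le t$, lie in pairwise distinct semiconjugacy classes $C_{1},\dots ,C_{t}$ says precisely that each $u\otimes u^{m_{i}}v^{n_{i}}$ is the full $C_{i}$-graded component of $\sum_{i}u\otimes u^{m_{i}}v^{n_{i}}$; since that chain is assumed to be a cycle, each $u\otimes u^{m_{i}}v^{n_{i}}$ is a cycle in its own right.

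Next I would apply Proposition~\ref{prop-3}: every cycle $u\otimes u^{m}v^{n}$ is nonzero in $HH_{1}(\mathbb{Z}G,(\mathbb{Z}G)^{\phi})_{C}$. Therefore, in the direct sum above, the homology class of $\sum_{i}u\otimes u^{m_{i}}v^{n_{i}}$ has a nonzero entry in each of the distinct summands indexed by $C_{1},\dots ,C_{t}$, so it cannot be homologous to zero; this is the nontriviality assertion. For the last claim I would evaluate the composite $HH_{1}(\mathbb{Z}G,(\mathbb{Z}G)^{\phi})_{C}\cong H_{1}(Z(g_{C}))\stackrel{j_{C}}{\longrightarrow}H_{1}(G)$ on a cycle $u\otimes u^{m}v^{n}=u^{1}v^{0}\otimes u^{m}v^{n}$: by Lemma~\ref{lemma-1} the cycle condition forces $(1,0)\in\ker([\phi]-I)$, and under the identification $Z(g_{C})\cong\ker([\phi]-I)$ of Corollary~\ref{corol-2} this cycle corresponds to $(1,0)$, i.e.\ to the class of the ``loop factor'' $u$; pushing it into $H_{1}(G)$ by the inclusion-induced map yields $[u]$, one of the two generators of $H_{1}(G)\cong\mathbb{Z}\,[u]\oplus\mathbb{Z}\,[v]$.

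As a sanity check, this last point matches Propositions~\ref{prop-1} and~\ref{prop-2}, which reduce every $1$-cycle to a $\mathbb{Z}$-linear combination of chains of the form $u\otimes u^{\ast}v^{\ast}$ whose ``$u$-content'' is exactly that coefficient. The one step that needs genuine care is the first: correctly identifying the graded component of each $u\otimes u^{m_{i}}v^{n_{i}}$ and verifying that $d_{1}$ and $d_{2}$ respect the grading; after that, everything follows formally from Proposition~\ref{prop-3} and the direct-sum decomposition.
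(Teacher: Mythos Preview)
The paper does not actually prove this corollary: it is one of the ``following results'' whose proof is deferred to \cite{S-14}. Hence there is no in-paper argument to compare against.

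Your proposal is correct and is the natural argument. The key ingredients---the decomposition of $HH_{1}(\mathbb{Z}G,(\mathbb{Z}G)^{\phi})$ as a direct sum over semiconjugacy classes, the fact that $d_{1}$ and $d_{2}$ are homogeneous for this grading (so that a cycle with components in distinct classes has each component a cycle), and Proposition~\ref{prop-3} guaranteeing each $u\otimes u^{m_{i}}v^{n_{i}}$ is nontrivial in its own summand---are exactly what one needs, and you invoke them correctly. The identification of the image in $H_{1}(G)$ via Lemma~\ref{lemma-1} and Corollary~\ref{corol-2} is also right: the ``loop factor'' $u$ in $u\otimes u^{m}v^{n}$ is what survives under the isomorphism $HH_{1}(\mathbb{Z}G,(\mathbb{Z}G)^{\phi})_{C}\cong H_{1}(Z(g_{C}))$ and maps to $[u]$ under inclusion. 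One small remark: you do not really need to \emph{assume} the sum is a cycle and then deduce each summand is, since Lemma~\ref{lemma-1} already shows $u\otimes u^{m}v^{n}$ is a cycle whenever $(1,0)\in\ker([\phi]-I)$; but your route through the grading is equally valid and arguably cleaner.
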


%%%%%%%%%%%%%%%%%%%%%%%%%%%%%%%%%%%%%%%%%%%
\section{Homotopies on torus}

Let $F: T \times I \to T$ be a homotopy on torus $T$.

\begin{prop} \label{prop-h}
Let $F: T \times I \to T$ be a homotopy. Suppose that $L(F_{t}) = 0$ for each $t \in I$. Then $F$ is homotopic to a homotopy $H$ 
with $H$ transverse the projection $P: T \times I \to T$ such that $Fix({H}_{| T \times \{0,1 \}}) = ptyset$.
\end{prop}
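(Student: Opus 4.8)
The plan is to use the hypothesis only through its consequences at the two endpoints $t=0$ and $t=1$. First I would note that $L(F_{0})=L(F_{1})=0$ (equivalently, by the Brooks et al.\ identity $N=|L|$ for torus maps \cite{B-B-P-T-75}, $N(F_{0})=N(F_{1})=0$). Now $F_{0}$ is freely homotopic to the affine self-map $\bar A_{0}$ of $T=\mathbb{R}^{2}/\mathbb{Z}^{2}$ determined by the integer matrix $A_{0}=(F_{0})_{\ast}$ it induces on $H_{1}(T)$, and $L(F_{0})=\det(I-A_{0})=0$ says exactly that $I-A_{0}$ is singular; hence the image of the endomorphism $I-A_{0}\colon T\to T$ is a proper closed subgroup of $T$. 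Choosing $c\in T$ outside that subgroup, the translate $x\mapsto A_{0}x+c$ has no fixed point and is homotopic, through $x\mapsto A_{0}x+sc$, to $\bar A_{0}$ and hence to $F_{0}$. So there is a fixed point free map $g_{0}\simeq F_{0}$, and likewise a fixed point free $g_{1}\simeq F_{1}$.

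Next I would replace $F$ by a freely homotopic homotopy whose endpoints are $g_{0}$ and $g_{1}$: pick collars $T\times[0,\varepsilon]$ and $T\times[1-\varepsilon,1]$, insert the homotopies $F_{0}\simeq g_{0}$ and $F_{1}\simeq g_{1}$ on these collars, and reparametrise the middle, obtaining $F'\simeq F$ with $F'_{0}=g_{0}$, $F'_{1}=g_{1}$ both fixed point free, so that $\mathrm{Fix}(F'_{|T\times\{0,1\}})=\emptyset$.

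Finally I would put $F'$ in general position. Fix a smooth (or PL) structure on $T$, approximate $F'$ by a smooth map, and perturb $\Phi=(F',P)\colon T\times I\to T\times T$, $\Phi(x,t)=(F'(x,t),x)$, to be transverse to the diagonal $\Delta\subset T\times T$, carrying out the perturbation relative to a collar of $T\times\{0,1\}$; this is legitimate precisely because $\Phi|_{\partial}$ is already transverse to $\Delta$, the boundary fixed point sets being empty. The result is a homotopy $H\simeq F$ that is transverse to $P$, whose fixed point set $\mathrm{Fix}(H)$ is a closed $1$-submanifold of $T\times(0,1)$, i.e.\ a disjoint union of circles, and with $\mathrm{Fix}(H_{|T\times\{0,1\}})=\emptyset$. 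Note that for each fixed $t$ the slices $F_{t}$ and $H_{t}$ are homotopic, so $L(H_{t})=L(F_{t})=0$ is automatically retained along the deformation.

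I expect the main obstacle to be the first step: converting the algebraic vanishing $L(F_{0})=L(F_{1})=0$ into the geometric statement that the endpoint maps can be deformed to be fixed point free. This is exactly the point at which the special structure of the torus is used --- through the Brooks et al.\ identity $N=|L|$ \cite{B-B-P-T-75} and the fact that $T$, being a topological group, is a Wecken space, so that a self-map with vanishing Lefschetz number really does admit a fixed point free representative (realised concretely above by the affine translation argument). Once the endpoints are fixed point free, the remaining transversality step is routine smooth general position relative to the boundary collar and needs no further input from the hypothesis.
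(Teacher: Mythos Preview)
Your proposal is correct and follows essentially the same strategy as the paper: use $L(F_0)=L(F_1)=0$ to homotope the end maps to fixed point free maps, splice these homotopies in on collars, and arrange transversality to the projection. The only difference is the order of operations: the paper first makes $F$ transverse and then clears the endpoints on collars chosen so small that the transverse fixed set is not disturbed, whereas you clear the endpoints first and then apply relative transversality; your order is arguably cleaner, and your explicit affine translation argument for the fixed point free step is more detailed than the paper's, which simply invokes the existence of such homotopies $H_1,H_2$.
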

\begin{proof} We can choose a homotopy $F_{0}$ homotopic to $F$ with 
$F_{0}$ transverse the projection $P$. Therefore, $Fix(F_{0})$ is transverse, that is, $Fix(F_{0}) \cap (T \times \{t\})$ is finite. 
Since $L(F_{|T}) = L({F_{0}}_{|T}) = 0$ then for each $\frac{1}{2}> \epsilon > 0 $ we can deform $F_{0}$ to a homotopy $F_{1}$ such that $F_{1}(x,t) = F_{0}(x,t)$ for each $(x,t) \in T \times [\epsilon, 1-\epsilon]$ and $F_{1}$ has no fixed points in $T \times \{0,1 \}$. In fact, take $A:T \times I \times I \to T$ defined by
$$ A((x,y),t,s) =  \left \{  
\begin{array}{lll}
 F_{0}(x,y,0) & if & 0 \leq t \leq s\epsilon \\
 F_{0}(x,y,\frac{1}{1-2s\epsilon}(t-s\epsilon)) & if & s\epsilon \leq t \leq 1-s\epsilon \\
 F_{0}(x,y,1) & if & 1-s\epsilon \leq t \leq 1 \\
\end{array} \right.
$$

Since $L({F_{0}}_{|T}) = 0$, there are two homotopies $H_{1}, H_{2}: T \times I \to T$ such that $H_{1}(x,y,1) = F_{0}(x,y,0)$, 
$H_{2}(x,y,0) = F_{0}(x,y,1)$ and $H_{1}(x,y,0)$, $H_{2}(x,y,1)$ are fixed points free maps.  
Considere the homotopy $B: T \times I \times I \to T$ defined by;
$$ B((x,y),t,s) =  \left \{  
\begin{array}{lll}
 H_{1}(x,y,\frac{t}{\epsilon}s) & if & 0 \leq t \leq \epsilon \\
 F_{0}(x,y,\frac{1}{1-2\epsilon}(t-\epsilon)) & if & \epsilon \leq t \leq 1-\epsilon \\
 H_{2}(x,y,\frac{(t-1+\epsilon)}{\epsilon}s) & if & 1-\epsilon \leq t \leq 1 \\
\end{array} \right.
$$

Thus, taking
$$ J((x,y),t,s) =  \left \{  
\begin{array}{lll}
 A((x,y),t,2s) & if & 0 \leq s \leq \frac{1}{2} \\
 B((x,y),t),2s-1)& if & \frac{1}{2} \leq s \leq 1 \\
\end{array} \right.
$$
we have a homotopy between $F_{0}$ and a map $H$ where $H$ satisfying the hypothesis of the theorem. 
Note that we can choose $\frac{1}{2} > \epsilon > 0 $ such that $Fix(F_{0}) \subset T \times [\epsilon, 1-\epsilon]$ because $Fix(F_{0})$ is contained in $int(T \times I)$. Thus, $Fix(H)$ is transverse.  \qed \end{proof}

\bigskip

\begin{figure}[!htp]
\begin{center}
		\includegraphics[scale=0.28]{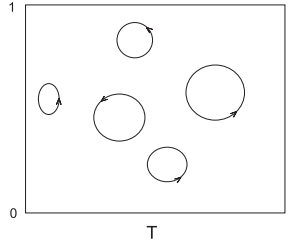}
	\caption{Circles in $Fix(F).$}
		\end{center}
\end{figure}

Let $Fix(F,\partial)$ be the subset of $Fix(F)$ consisting of those circles of fixed points which are not in the same fixed point class as any fixed point of $F_{0}$ or $F_{1}$. From \cite{G-N-94} $Fix(F)$ consists of oriented arcs and circles. 

From Proposition \ref{prop-h} if $F: T \times I \to T$ is a homotopy and $P:T \times I \to T$ the projection then we can choose $F$ such that $Fix(F)$ is transverse the projection $P$. Thus, $Fix(F,\partial)$ is a closed oriented 1-manifold in the interior of $T \times I \times T$. Let $E_{F}$ be space of all paths $\omega(t)$ in $T \times I \times T$ from the graph $\Gamma_{F}= \{(x,t,F(x,t))|(x,t) \in T \times I \}$ of $F$ to the graph $\Gamma_{P} = \{(x,t,x)|(x,t) \in T \times I  \}$ of $P$ with the compact-open topology, that is, maps $\omega:[0,1] \to T \times I \times T$ such that $\omega(0) \in \Gamma(F)$ and $\omega(1) \in \Gamma(P)$. 

Let $C_{1}, ... , C_{k}$ be isolated circles in $Fix(F) \cap int(T \times I)$, oriented by the natural orientations, and $V = \bigcup C_{j}.$ Then $V$ determines a family of circles $V^{'}$ in $E_{F}$ via constant paths, i.e. each oriented isolated circle of fixed points $C: S^{1} \to T \times I$ of $F$ determines an oriented circle $C^{'}: S^{1} \to E$ defined by $con(C(z))$ where $con(C(z)$ is the constant path at $C(z) = (x,t_{0})$, that is, $con(C(z))(t) = (x,t_{0},x)$ for each $t \in [0,1]$. 
Therefore, we can write $\displaystyle \sum i(F,C_{j}).[C_{j}^{'}] \in H_{1}(E_{F})$. Since $C_{j}$ is transverse then $i(F,C_{j})= 1$ for all $j$, see \cite{D-94}. From \cite{G-N-94} we have;

\begin{prop} \label{prop-G}
Since $\pi_{2}(T) = 0$ then there is a isomorphism $\Psi: H_{1}(E_{F}) \to HH_{1}(\mathbb{Z}G, (\mathbb{Z}G)^{\phi})$, where $G = \pi_{1}(T,x_{0})$.
\end{prop}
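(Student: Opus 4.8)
The plan is to recognise $E_{F}$, up to homotopy, as the homotopy pullback (double mapping path space) of the two inclusions $\Gamma_{F}\hookrightarrow T\times I\times T\hookleftarrow\Gamma_{P}$, and then to read off its homotopy type from that of $T\times I\times T$. The role of the hypothesis $\pi_{2}(T)=0$ is precisely to guarantee that this ambient space is aspherical, which in turn forces $E_{F}$ to be aspherical and reduces the whole computation to group theory; this is why $\pi_{2}(T)=0$ is singled out in the statement.

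First I would observe that the endpoint map $\mathrm{ev}:E_{F}\to\Gamma_{F}\times\Gamma_{P}$, $\omega\mapsto(\omega(0),\omega(1))$, is a fibration whose fibre over a pair of basepoints is the path space of $T\times I\times T$ between them, hence is homotopy equivalent to $\Omega(T\times I\times T)$. Since $T\times I\times T$ is homotopy equivalent to $T\times T$, a $K(G\times G,1)$, and in particular has vanishing $\pi_{2}$ by hypothesis, this fibre is homotopy discrete, equivalent to the underlying set of $G\times G$. Feeding this into the long exact homotopy sequence of the fibration shows that $E_{F}$ is aspherical and that $\mathrm{ev}$ presents it, up to homotopy, as a covering space of $\Gamma_{F}\times\Gamma_{P}\simeq T\times T$; in particular each path component of $E_{F}$ is a $K(H,1)$ for some subgroup $H\leq G\times G$.

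Next I would identify these components and their fundamental groups algebraically. On $\pi_{1}$ the inclusion $\Gamma_{F}\hookrightarrow T\times I\times T$ is $g\mapsto(g,\phi(g))$ — here one must fold in the basepath $\tau$ so that $\phi$ really is $c_{\tau}\circ F_{\#}$ — with image the graph subgroup $\Gamma_{\phi}\leq G\times G$, while $\Gamma_{P}\hookrightarrow T\times I\times T$ is the diagonal $g\mapsto(g,g)$ with image $\Delta$. Standard covering-space bookkeeping then identifies $\pi_{0}(E_{F})$ with the double coset set $\Gamma_{\phi}\backslash(G\times G)/\Delta$; a one-line normalisation shows every double coset is represented by a pair $(c,e)$ with $c$ well defined up to $c\sim g\,c\,\phi(g)^{-1}$, so $\pi_{0}(E_{F})$ is in natural bijection with the set $G_{\phi}$ of semiconjugacy classes of $F$. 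A stabiliser computation shows that the component indexed by $C$ has fundamental group $\{\,g\in G:g\,g_{C}\,\phi(g)^{-1}=g_{C}\,\}=Z(g_{C})$, so that component is a $K(Z(g_{C}),1)$. For the torus this is completely explicit: $G=\mathbb{Z}^{2}$ is abelian, $G_{\phi}=\mathbb{Z}^{2}/\mathrm{im}([\phi]-I)$, and $Z(g_{C})=\ker([\phi]-I)$ for every $C$, cf. Proposition \ref{prop-conjugacy-classes} and Corollary \ref{corol-2}.

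Finally, taking first homology and using that the homology of a disjoint union splits over components gives
$$ H_{1}(E_{F})\;=\;\bigoplus_{C\in G_{\phi}}H_{1}(Z(g_{C})), $$
which is precisely the direct sum decomposition of $HH_{1}(\mathbb{Z}G,(\mathbb{Z}G)^{\phi})$ recalled in the definition of $R(F)$ above, and this is the isomorphism $\Psi$. I expect the genuine difficulty to be bookkeeping rather than conceptual: one must check that the basepoint and basepath conventions make the topologically defined $\phi$ coincide with the algebraic one, that the bijection $\pi_{0}(E_{F})\cong G_{\phi}$ respects the indexing used in the Hochschild complex, and — for the use made of $\Psi$ in the next section — that $\Psi$ is natural enough to carry $\sum_{j}i(F,C_{j})\,[C_{j}']$ to $R(F)$. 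All of this is carried out in \cite{G-N-94}; the torus is a particularly clean instance, since $G\times G$ is abelian and every space in sight is manifestly aspherical.
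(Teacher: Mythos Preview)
Your argument is correct and is essentially the content of the relevant section of \cite{G-N-94}: identify $E_{F}$ as the homotopy pullback of the graphs of $F$ and $P$ inside $T\times I\times T$, use asphericity of the ambient space (this is where $\pi_{2}(T)=0$ enters) to conclude that each component of $E_{F}$ is aspherical with fundamental group a semicentralizer $Z(g_{C})$, and then match the resulting decomposition $H_{1}(E_{F})\cong\bigoplus_{C}H_{1}(Z(g_{C}))$ with the standard splitting of $HH_{1}(\mathbb{Z}G,(\mathbb{Z}G)^{\phi})$.

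There is nothing to compare, however, because the paper does not supply its own proof of this proposition: it is stated as a quotation from \cite{G-N-94} (see the line ``From \cite{G-N-94} we have;'' immediately preceding the statement) and no argument is given. Your sketch therefore does not differ from the paper's approach so much as fill in what the paper chose to omit; the bookkeeping caveats you flag (basepath conventions, compatibility with the Hochschild indexing, naturality for the passage from $\sum_{j}[C_{j}']$ to $R(F)$) are exactly the points treated in \cite{G-N-94}, so your closing reference is appropriate.
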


\begin{remark} From \cite{D-94}, section $IV$, given $F: T \times I \to T$ a homotopy then we can to deform $F$ to a homotopy $G$ such that in each fixed point class of $G$ has an unique circle, and this circle is transverse the projection. 
\end{remark}

Now we are going to proof the main result.

\begin{theorem}[Main Theorem] \label{maintheorem}
If $F: T \times I \to T$  is a homotopy then the one-parameter Lefschetz class $L(F)$ of $F$ satisfies 
$L(F) = \pm N(F)\alpha$ where $\alpha$ is one of the two generators of $H_{1}(\pi_{1}(T),\mathbb{Z})$.
\end{theorem}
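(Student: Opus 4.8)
The strategy is to reduce the computation of $L(F)$ to a concrete homological computation inside $HH_1(\mathbb{Z}G,(\mathbb{Z}G)^\phi)$ using the structural results from Section 3. First I would handle the degenerate case: by the one-parameter Lefschetz fixed point theorem together with Proposition \ref{prop-h}, if $L(F_t)=0$ for every $t$ then after a homotopy rel $X\times\{0,1\}$ (which does not change $R(F)$, by Theorem \ref{invariance}) we may take $Fix(F,\partial)=\emptyset$, whence $R(F)=0$, so $N(F)=0$ and $L(F)=0$ and the statement holds trivially. So assume $L(F_t)\neq 0$ for some, hence (by continuity of the Lefschetz number in the parameter) for all $t$; then $\det([\phi]-I)\neq 0$ is \emph{false}, i.e.\ we are precisely in the situation where $[\phi]-I$ is singular and semiconjugacy classes are infinite. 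Up to conjugating $[\phi]$ by an element of $GL_2(\mathbb{Z})$ — which corresponds to an automorphism of $G=\pi_1(T)$ and changes neither $N(F)$ nor the pair of generators of $H_1(G)$ — I would put $[\phi]-I$ into a normal form with first column $(0,0)^T$, i.e.\ $b_1=1$, $b_2=0$, which is exactly the hypothesis of Proposition \ref{prop-2} and (after a further reduction) of Corollary \ref{corollary-1}.

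\textbf{Main computation.} With $b_1=1,b_2=0$, Proposition \ref{prop-2} says every $1$-cycle representing $R(F)$ is homologous to one of the form $\sum_{i=1}^{\bar t}\bar a_i\, u\otimes u^{\bar m_i}v^{\bar n_i}$, and Proposition \ref{prop-1} lets me normalize each $C$-component $i(F,C)$ to a multiple $n_C\, u\otimes u^{m_C}v^{n_C}$ of a single generator of $HH_1(\mathbb{Z}G,(\mathbb{Z}G)^\phi)_C$. Now the geometric input enters: by the Remark after Proposition \ref{prop-G} we may assume each fixed point class of $F$ contains exactly one transverse circle $C_j$, so by \cite{D-94} the index $i(F,C_j)=\pm1$ — and I claim the sign is constant across all $j$. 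The intuition is that on the torus all essential fixed-point circles of a transverse homotopy are ``parallel'': orienting $Fix(F,\partial)$ by the natural orientation, each circle maps to a generator of $\ker([\phi]-I)\cong\mathbb{Z}$, and the index sign records whether this matches a fixed reference orientation; consistency of orientations along the (closed, oriented) $1$-manifold $Fix(F,\partial)$, combined with the fact that distinct circles lie in distinct semiconjugacy classes and each projects to $[u]$ by Corollary \ref{corollary-2}, forces a single global sign $\varepsilon=\pm1$. Hence $R(F)=\varepsilon\sum_{j=1}^{k} u\otimes u^{m_j}v^{n_j}$ with the $k$ summands in pairwise distinct classes, so $N(F)=k$ (each summand is nontrivial by Proposition \ref{prop-3}), and applying $\sum_C j_C$ and Corollary \ref{corollary-2}, which identifies the image of each $u\otimes u^{m_j}v^{n_j}$ in $H_1(G)$ with the class $\alpha=[u]$, gives $L(F)=\varepsilon k\,\alpha=\pm N(F)\alpha$.

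\textbf{Main obstacle.} The delicate point is the claim that the index sign is the \emph{same} for all the essential circles — i.e.\ that one cannot have two essential fixed-point circles of opposite index contributing $u\otimes\cdots$ and $-u\otimes\cdots$. A priori Corollary \ref{corollary-2} only guarantees nontriviality of $R(F)$ and that each summand projects to $\pm[u]$; getting a \emph{uniform} sign requires a genuine argument. I expect to argue this by transferring everything through the isomorphism $\Psi: H_1(E_F)\to HH_1(\mathbb{Z}G,(\mathbb{Z}G)^\phi)$ of Proposition \ref{prop-G}: the classes $[C_j']\in H_1(E_F)$ of the constant-path lifts of the oriented circles $C_j$, being realized by disjoint embedded oriented circles inside the oriented closed $1$-manifold $Fix(F,\partial)$ (after the normalization of the Remark), must represent the same sign of the generator of $\ker([\phi]-I)$ once the natural orientation is fixed — this is where the low-dimensional topology of $T\times I\times T$ and the explicit form of $\phi$ are used. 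If a fully rigorous proof of the uniform sign proves elusive, the fallback is to note that $|L(F)|$ is an invariant computable from $|L(F_0)|=|L(F_1)|$ via the torus result $N(f)=|L(f)|$ of \cite{B-B-P-T-75} applied to the slice maps, pinning down the magnitude $N(F)$ independently and leaving only the sign to be read off from a single circle.
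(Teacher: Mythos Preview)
Your case split is inverted, and this is not just a labeling issue: it breaks both halves of the argument. Recall that for a self-map of the torus inducing $\phi$ on $\pi_1$, the ordinary Lefschetz number is $L(F_t)=\det(I-[\phi])$. Hence $L(F_t)=0$ is exactly the case $\det([\phi]-I)=0$, i.e.\ $[\phi]-I$ singular; this is the \emph{interesting} case, not the degenerate one. Conversely, when $L(F_t)\neq 0$ one has $\det([\phi]-I)\neq 0$, so by Corollary~\ref{corol-2} every semicentralizer $Z(g_C)$ is trivial, $H_1(Z(g_C))=0$, and therefore $HH_1(\mathbb{Z}G,(\mathbb{Z}G)^\phi)=0$; this immediately gives $R(F)=0$ and $L(F)=N(F)=0$. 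That is the paper's Case~II, handled in one line --- you never make this argument.

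Your ``degenerate'' case also misreads Proposition~\ref{prop-h}. That proposition only arranges $Fix(H|_{T\times\{0,1\}})=\emptyset$, i.e.\ it kills the \emph{boundary} fixed points so that $R(F)$ becomes an honest cycle; it does \emph{not} make $Fix(F,\partial)$ empty. Indeed the whole point of one-parameter theory is that when $L(F_t)=0$ one may still have essential circles of fixed points in the interior, detected by $R(F)\neq 0$.

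For the genuine main case ($b_1=1$, $b_2=0$), your geometric strategy via transverse circles and Proposition~\ref{prop-G} is plausible but, as you yourself flag, the uniform-sign step is a real gap, and your fallback via \cite{B-B-P-T-75} does not control $N(F)$ (the one-parameter Nielsen number is not the classical Nielsen number of a slice). The paper sidesteps this entirely by an algebraic observation you are missing: since $T$ is a regular CW complex, the entries of the boundary matrices $[\tilde\partial_1],[\tilde\partial_2]$ lie in $\{0,\pm1,\pm u,\pm v\}$, so every term of $\mathrm{trace}(\tilde\partial_*\otimes\tilde D_*)$ already has first tensor factor in $\{\pm1,\pm u,\pm v\}$. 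The $\pm1\otimes(\cdot)$ terms die, and Proposition~\ref{prop-2} forces the $v\otimes(\cdot)$ terms to vanish once $R(F)$ is a cycle; what remains is $u\otimes\sum_{j=1}^{n}h_j$ (or the same with a global $-u$ depending on the orientation convention), giving $N(F)=n$ and $L(F)=\pm n[u]$ directly, with no circle-by-circle sign comparison needed.
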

\begin{proof}
The proof this theorem will be done in two cases. Case I when $det([\phi]-I)=L(F_{|T}) = 0$ and case II 
when $det([\phi]-I)=L(F_{|T}) \neq 0$.

\bigskip

{\bf Case I}

Let us suppose that the homomorphism $\phi$ is induced by a homotopy $F$ satisfies $det([\phi]-I)=0$. Using the notation above we can suppose which $\phi$ is given by $$ [\phi] = \left ( 
\begin{array}{cc}
  1 & b_{3} \\
  0 & b_{4} \\
\end{array} \right ),
$$
and $[\phi] \neq I \equiv $ (Identity), that is, $b_{1} = 1$ and $b_{2} = 0$, with $b_{3} \neq 0$ or $b_{4}-1 \neq 0$. 
This is done choosing a base $\{v,w\}$ for $T = \mathbb{R}^{2}/\mathbb{Z}^{2}$, where $v$ is a eigenvector of $[\phi]$ associated to $1$.   

Note that if $[\phi] = I$ then $R(F) = 0$ because any $F$ can be deformed to a fixed point free map. For example, take the homotopy 
$F: T \times I \to T$ defined by;
$$
F((x,y),t) = (x+c_{1}t+\epsilon, y+c_{2}t)
$$
with $\epsilon$ any irrational number between $0$ and $1$. We will have $[F_{\#}] = [\phi] = I$, but $F$ is a fixed point free map. Thus 
$R(F) = 0$, which implies $L(F) = N(F) = 0$. Therefore, henceforth we suppose $[\phi] \neq I.$

Since $T$ is a polyhedron then $T$ is a regular CW-complex. Thus, for any cellular decomposition of the torus the entries of matrices of the operators $\tilde{\partial}_{1}$ and $\tilde{\partial}_{2}$ will be composed by elements $0, \pm 1, \pm u,\pm v,$ because the incindence number of a 2-cell in a 1-cell is $\pm 1$ and the the incindence of one 1-cell in one 0-cell is $\pm 1$, see chapter II of \cite{W-18}.

Therefore chosen an orientation to each cell in a  decomposition cellular to the torus then the one-parameter trace $R(F)$ will be the form of the following matrix:
$$ R(F) = tr \left( \begin{array}{cc}
[-\tilde{\partial}_{1}] \otimes [\tilde{D}_{0}] & 0 \\
0 & [\tilde{\partial}_{2}] \otimes [\tilde{D}_{1}] \\
\end{array} \right )
$$
where $[\tilde{\partial_{1}}]_{ij}, [\tilde{\partial_{2}}]_{kl} \in \{0, \pm 1, \pm u,\pm v,  \}.$ Thus, we can write 
$$
R(F) = \pm 1 \otimes (\displaystyle\sum_{i=1}^{m} g_{i} ) +  u \otimes (\displaystyle\sum_{j=1}^{n} h_{j} ) + 
v \otimes (\displaystyle\sum_{k=1}^{p} t_{k} )
$$
or only $-u$ or $-v$, where $g_{i} = u^{m_{i}}v^{n_{i}}$, $h_{j} = u^{x_{j}}v^{y_{j}}$ and $t_{k} = u^{z_{k}}v^{w_{k}}$. We will suppose which $R(F)$ is write like above. The case with $-u$ or $-v$ the proof is analogous.  

From Lemma 4.1 of \cite{S-14} the element $\pm 1 \otimes (\displaystyle\sum_{i}^{m} g_{i} )$ is homologous to zero. By Proposition \ref{prop-h} we can suppose that $F$ has no fixed points in $T \times \{0,1\}$. In this situation $R(F)$ will be a 1-cycle in $HH_{1}(\mathbb{Z}G, (\mathbb{Z}G)^{\phi})$. Thus, By Proposition \ref{prop-2}, the sum  $v \otimes (\displaystyle\sum_{k}^{p} t_{k} )$ can not be appear in one-parameter trace $R(F)$ of $F$. Therefore, in this case the trace $R(F)$ has the form:
$$
R(F) = \pm 1 \otimes (\displaystyle\sum_{i=1}^{m} g_{i} ) +  u \otimes (\displaystyle\sum_{j=1}^{n} h_{j} ) $$

From Proposition \ref{prop-G} each C-component nonzero in $R(F)$ will represent by one unique cycle. Therefore the one-parameter Nielsen number in this case will be $N(F) = n$. 

From section 2 the one-parameter Lefschetz class is the image of $R(F)$ in $H_{1}(G)$ by induced of inclusion $i: Z(g_{C}) \to G$. Thus, each element $u \otimes h_{j}$ is sending in $H_{1}(G)$ in the class $[u]$, that is, the image of $R(F)$ in $H_{1}(G)$ will be 
$$ L(F) = \displaystyle\sum_{j=1}^{n} [u] = n[u] = N(F)[u]$$
Take $\alpha = [u]$, which is one of the two generators of $H_{1}(G)$. If we consider left action instead right action in the covering space  we will obtain  $L(F) = -N(F)[u]$.  Therefore, $$L(F) = \pm N(F)\alpha $$ 

\bigskip

{\bf Case II}

In this case we have $det([\phi]-I) = L(F_{|T}) \neq 0$. Therefore, by Corollary \ref{corol-2}, for each element $g \in G$ the semicentralizer, 
$Z(g)$, of $g$ in $G$ is trivial. Thus, $H_{1}(Z(g_{C})) = 0$ for each semiconjugacy class $C$, that is, 
$HH_{1}(\mathbb{Z}G, (\mathbb{Z}G)^{\phi}) = 0$ which implies $R(F) = 0.$ In this case we have $L(F) = N(F) = 0.$  \qed \end{proof}

\bigskip

We have other interpretation in Case II. Note that by definition of $R(F)$ in section 2 we are not considering in trace $R(F)$ the semiconjugacy classes represented by fixed point classes which meet $T \times \{0,1\}$. If we consider all fixed points classes then the trace $R(F)$ has the form:
$$ R(F) = \pm 1 \otimes (\displaystyle\sum_{i=1}^{m} g_{i} ) + v \otimes (\displaystyle\sum_{k=1}^{p} t_{k}  $$
because in this situation can not be appear circles in $Fix(F)$, but only arcs join $T \times \{0\}$ to $T \times \{ 1 \}$. 
By Proposition \ref{prop-2} $R(F)$ can not be a 1-cycle. Since for each $t$ the map $F_{t}$ can be deformed to a map with 
$L(F_{| T})$ fixed points, then from Theorem 3.3 of \cite{G-K-02} we will have $p = L(F_{|T}) = det([\phi]-I),$ i.e. in this 
case $Fix(F)$ will be compose by $det([\phi]-I)=L(F_{|T})$ arcs join $T \times \{ 0 \}$ to $T \times \{ 1 \}.$

\begin{figure}[!htp]
\begin{center}
		\includegraphics[scale=0.28]{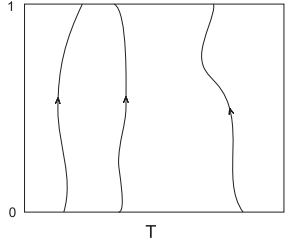}
	\caption{Arcs in $Fix(F).$}
	\end{center}
\end{figure}

%% The Appendices part is started with the command \appendix;
%% appendix sections are then done as normal sections
%% \appendix

%% \section{}
%% \label{}

%% References
%%
%% Following citation commands can be used in the body text:
%% Usage of \cite is as follows:
%%   \cite{key}          ==>>  [#]
%%   \cite[chap. 2]{key} ==>>  [#, chap. 2]
%%   \citet{key}         ==>>  Author [#]

%% References with bibTeX database:

%\bibliographystyle{model1a-num-names}
%\bibliography{<your-bib-database>}%% Authors are advised to submit their bibtex database files. They are
%% requested to list a bibtex style file in the manuscript if they do
%% not want to use model1a-num-names.bst.

%% References without bibTeX database:

\end{document}